\documentclass[12pt]{article}
\usepackage{graphicx}
\usepackage{color}
\usepackage{enumerate}
\usepackage{latexsym}
\usepackage[T1]{fontenc}
\usepackage{float}
\usepackage{amssymb,amscd,amsmath,amsfonts,amsthm}
\usepackage{multicol}
\usepackage{tabularx,environ,array}
\usepackage{tikz}
\usepackage{comment}
\usepackage{hyperref}
\usepackage{boxedminipage}

\hypersetup{colorlinks=true, linkcolor=blue, citecolor=blue, urlcolor=blue}

\newtheorem{theorem}{Theorem}

\newtheorem{conjecture}{Conjecture}
\newtheorem{corollary}[theorem]{Corollary}

\newtheorem{lemma}[theorem]{Lemma}

\begin{document}

\title{Settling the total domination-annihilation conjecture for graphs with minimum degree two}

\author{Marko Jakovac$^{a,b,}$\thanks{\texttt{marko.jakovac@um.si}}}
\maketitle

\begin{center}
\vspace*{-0.7cm}
$^a$ Faculty of Natural Sciences and Mathematics, University of Maribor, Slovenia\\

\medskip
$^b$ Institute of Mathematics, Physics and Mechanics, Ljubljana, Slovenia
\end{center}

\begin{abstract}
The total domination number $\gamma_t(G)$ of a graph $G$ is the minimum cardinality of a set $D\subseteq V(G)$ such that every vertex of $G$ has a neighbor in $D$. The annihilation number $a(G)$ is the largest integer $k$ for which the sum of the $k$ smallest degrees of $G$ is at most $|E(G)|$. A well-known conjecture, originating from Graffiti.pc and later formulated explicitly by Desormeaux, Haynes, and Henning, asserts that $\gamma_t(G)\le a(G)+1$ for every connected nontrivial graph $G$. The conjecture is known for graphs of minimum degree at least three and for several classes of graphs having vertices of degree one or two. In this paper we settle the minimum-degree-two case. More precisely, we prove $\gamma_t(G)\le a(G)+1$ for every connected graph $G$ with $\delta(G)=2$. The proof combines two sharp bounds on the total domination number with an estimate for the annihilation number. Moreover, in some specific cases, the stronger inequality $\gamma_t(G)\le a(G)$ holds. 
\end{abstract}

\vspace{5mm}

\noindent
\textbf{Keywords}: total domination number, annihilation number, minimum degree two.

\vspace{5mm}

\noindent
\textbf{Math.\ Subj.\ Class.\ 2020:} 05C69.


\section{Introduction}

All graphs considered in this paper are finite, simple, undirected, and nontrivial (i.e., they have at least two vertices). For a graph $G$, let $V(G)$ and $E(G)$ denote its vertex and edge sets, and let $n(G)=|V(G)|$ and $m(G)=|E(G)|$ denote its order and size, respectively. The degree of a vertex $v$ is denoted by $d_G(v)$, or simply by $d(v)$ when the graph is clear from the context. The minimum and maximum degrees are denoted by $\delta(G)$ and $\Delta(G)$. For a set $X\subseteq V(G)$, we denote by $G[X]$ the subgraph of $G$ induced by $X$.

A set $D\subseteq V(G)$ is a \emph{total dominating set} of $G$ if every vertex of $G$ is adjacent to a vertex of $D$. The minimum cardinality of a total dominating set is the \emph{total domination number} of $G$, denoted by $\gamma_t(G)$. Total domination was introduced by Cockayne, Dawes, and Hedetniemi~\cite{CockayneDawesHedetniemi1980} and has subsequently developed into an extensive area of domination theory; see, for instance,~\cite{HenningYeoBook}.

Let $v_1,v_2,\ldots,v_{n(G)}$ be an ordering of the vertices of $G$ such that $d(v_1)\le d(v_2)\le\cdots\le d(v_{n(G)})$. For simplicity, let $d_i=d_G(v_i)$ for every $i \in \{1,\ldots,n(G)$\}. Thus,
$$d_1\le d_2\le\cdots\le d_{n(G)}$$
is the nondecreasing degree sequence of $G$. The \emph{annihilation number} of $G$, introduced by Pepper \cite{Pepper2004,Pepper2009}, is
$$a(G)=\max\left\{k :\ \sum_{i=1}^{k}d_i\le m(G)\right\}.$$
The following conjecture was posed in a slightly different form by Graffiti.pc and was later reformulated by Desormeaux, Haynes, and Henning.

\begin{conjecture}[\cite{DesormeauxHaynesHenning2013}]\label{conj:main}
If $G$ is a connected nontrivial graph, then
$$\gamma_t(G)\le a(G)+1.$$
\end{conjecture}

It follows directly from the definition that $a(G)\ge \left\lfloor\frac{n(G)}{2}\right\rfloor$. On the other hand, every graph of minimum degree at least three satisfies
$$\gamma_t(G)\le \left\lfloor\frac{n(G)}2\right\rfloor,$$
see, for instance, \cite{ArchdeaconEtAl2004}. Hence, Conjecture \ref{conj:main} holds when $\delta(G)\ge3$. It was proved in \cite{DesormeauxHaynesHenning2013} that the conjecture holds for trees. The result was subsequently extended to cactus graphs and block graphs in \cite{BujtasJakovac2019}, and to a broader class of tree-like graphs in \cite{YueZhuWei2020}. Further results for quasi-trees and several graph constructions were obtained in \cite{HuaHuaKlavzarXu2022}, while additional graph classes with minimum degree at most two were considered in \cite{HuaXuHua2023}. These results indicate that the main difficulty of the conjecture lies in the cases $\delta(G)\in\{1,2\}$.

In this paper, we settle the minimum-degree-two case by proving that $\gamma_t(G)\le a(G)+1$ for every connected nontrivial graph $G$ with $\delta(G)=2$. In fact, under an additional condition on the degree sequence, we obtain the stronger bound $\gamma_t(G)\le a(G)$. The proof uses two known results on total domination. First, we use Henning's bound for connected graphs with minimum degree at least two \cite{Henning2000}. We then apply the Henning--Yeo bound, which takes into account certain path components formed by vertices of degree two \cite{HenningYeo2007}. The main observation of this paper is that the correction term in this bound can be controlled by the annihilation number when the first vertex outside an optimal annihilation set has degree at least three.

\medskip

The paper is organized as follows. In Section~\ref{sec:prelim} we recall the definition of an annihilation set, introduce the cutoff degree, and state the two total-domination bounds needed later. In Section~\ref{sec:main}, we prove the key estimate for the annihilation number and use it to establish the main theorem. We finish in Section~\ref{sec:remarks} with the sharpness of the result and its consequence for the general conjecture.


\section{Preliminaries}\label{sec:prelim}

For a vertex set $X\subseteq V(G)$, we define
$$\Sigma(X,G)=\sum_{v\in X}d_G(v).$$
A set $X$ is an \emph{annihilation set} if $\Sigma(X,G)\le m(G)$. An annihilation set $X$ is \emph{optimal} if $|X|=a(G)$ and
$$\max\{d(v):\ v\in X\}\le \min\{d(u):\ u\in V(G)\setminus X\}.$$
Equivalently, an optimal annihilation set consists of $a(G)$ vertices of smallest possible degrees. For a connected nontrivial graph $G$, we have $a(G)\le n(G)-1$, since the sum of all degrees is $2m(G)>m(G)$. We may therefore define the \emph{cutoff degree}
$$d^*(G)=d_{a(G)+1}.$$
Thus $d^*(G)$ is the smallest degree among vertices outside an optimal annihilation set.

\medskip

We first recall an upper bound due to Henning on the total domination number in terms of the size of a graph.

\begin{theorem}[\cite{Henning2000}]\label{thm:Henning-size}
Let $G$ be a connected graph of size $m(G)$ with $\delta(G)\ge 2$. Then
$$\gamma_t(G)\le \frac{m(G)+2}{2}.$$
\end{theorem}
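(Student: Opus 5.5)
The plan is to argue by induction on $m(G)$, first reducing to edge-minimal graphs and then peeling off long paths of degree-two vertices.

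\emph{Reduction to edge-minimal graphs.} Suppose $G$ has an edge $e=xy$ that lies on a cycle and satisfies $d(x),d(y)\ge 3$. Then $G-e$ is still connected and has minimum degree at least two. Total domination number cannot increase when edges are added, so $\gamma_t(G)\le\gamma_t(G-e)$. By induction, $\gamma_t(G-e)\le\frac{(m(G)-1)+2}{2}<\frac{m(G)+2}{2}$. We may therefore assume the following: every edge joining two vertices of degree at least three is a bridge.

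\emph{Base case: cycles.} If $G$ is a cycle $C_n$, the classical formula $\gamma_t(C_n)=\lfloor n/2\rfloor+\lceil n/4\rceil-\lfloor n/4\rfloor$ applies. Checking the residues $n\bmod 4$ gives $\gamma_t(C_n)\le\frac{n+2}{2}=\frac{m+2}{2}$, with equality exactly when $n\equiv 2\pmod 4$.

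\emph{Structure and reduction step.} Otherwise $G$ has branch vertices, meaning vertices of degree at least three. The degree-two vertices form \emph{threads}: maximal paths $P$ of degree-two vertices whose ends attach to branch vertices $u,v$ (possibly $u=v$). Each remaining edge is a bridge between two branch vertices. Pick a thread with $k\ge1$ internal vertices and remove its internal vertices; this deletes $k+1$ edges. If the remaining graph $G'$ is connected and has $\delta(G')\ge2$, take a minimum total dominating set of $G'$ from the inductive hypothesis. Extend it by a set of about $k/2$ internal vertices of the thread, using that $u$ and $v$ can help dominate the ends of the thread. The target is to spend at most $\frac{k+1}{2}$ extra vertices, so that the bound $\frac{m+2}{2}$ is preserved. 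If removal drops the degree of $u$ or $v$ to $1$, or disconnects $G$, I would instead absorb the neighbouring thread into a longer one and remove the whole ear. In the case $u=v$ the removed thread forms a cycle hanging at $u$; this is treated like a cycle glued at one vertex.

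\emph{Expected main obstacle.} The hard step is the parity bookkeeping in the extension. A path has total domination ratio roughly $1/2$, with losses of $+1$ in certain residues mod $4$, exactly as for cycles. The case analysis on $k\bmod 4$ must exploit the freedom to also place $u$ or $v$ into the dominating set, or to re-choose the set in $G'$ so that it contains $u$ or $v$. Handling the residues where the naive count exceeds $\frac{k+1}{2}$ by $\frac12$ is where I expect the real work. To close these cases, I would first try choosing the thread to be removed cleverly, for instance a thread ending at a leaf block of the block-cut tree. The $+2$ in the bound is what absorbs the one unavoidable loss at the cycle base case.
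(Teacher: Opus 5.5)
The paper gives no proof of this statement; it quotes it from Henning (2000). Your sketch therefore has to stand on its own, and as written it does not. Your edge-deletion reduction and the cycle base case are fine. The gap is the step you yourself flag as the main obstacle, which is exactly where the induction, as set up, fails.

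After deleting a thread $x_1,\dots,x_k$ attached at $u,v$, the inductive hypothesis gives only $|D'|\le\lfloor (m(G')+2)/2\rfloor$. It says nothing about which vertices $D'$ contains. You need $|D|\le\lfloor (m(G')+k+3)/2\rfloor$.

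If $u,v\notin D'$, you must totally dominate the whole path by itself. For $k\equiv 2\pmod 4$ this costs $\gamma_t(P_k)=k/2+1$. When $m(G')$ is even and $G'$ attains its bound, the total then exceeds the target by exactly one. The case $k=2$ already shows this.

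Adding $u$ yourself does not help: one vertex for $u$ plus $k/2$ on the path gives the same count. Your remedy, ``re-choose the set in $G'$ so that it contains $u$ or $v$'', has no basis in a hypothesis that only bounds cardinality.

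The same defect affects the cases you only gesture at: a deletion that disconnects $G$, one that creates a vertex of degree one, or a pendant cycle. Take two copies of $C_6$ joined by an edge $uv$. Here every thread is a pendant cycle at a vertex of degree $3$, and $\gamma_t=6\le 7=\lfloor (13+2)/2\rfloor$. Applying the bound to the two hexagons separately gives $4+4=8>7$. Deleting one hexagon together with $uv$ leaves $C_6$, after which you can afford only three more vertices. That works only if the set chosen in the remaining $C_6$ contains $v$, in which case $\{u,a_3,a_4\}$ suffices.

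So what is missing is an inductive statement that carries information about attachment vertices, not just a size bound. For instance, you could bound sets that are required to contain a designated vertex, or that are allowed not to dominate it. You would then still have to carry out the case analysis for pendant cycles and bridges.

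For long threads, the residue problem can be avoided by a reduction that does not depend on $D'$ at all. Delete $x_1,\dots,x_4$ and join $u$ to $x_5$, taking care when $u=v$ or the thread is short. This removes exactly four edges. Checking the four cases according to whether $u$ and $x_5$ lie in the set shows that two of $x_1,\dots,x_4$ always suffice to repair domination, so $\gamma_t(G)\le (m-4+2)/2+2=(m+2)/2$. This only reduces you to threads with at most three vertices. Length-two threads, bridges between branch vertices and short pendant cycles still need a genuine argument, which the proposal does not supply.
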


We next recall an upper bound on the total domination number in terms of the order of the graph and the structure formed by its vertices of degree two. We state it in the notation needed here. Let $G$ be a connected graph with $\delta(G)\ge2$ and $\Delta(G)\ge3$. We call a vertex \emph{small} if it has degree $2$ and \emph{large} if it has degree greater than $2$. Let $S$ and $L$ denote the sets of small and large vertices, respectively. Since $G$ is connected and $L\ne\emptyset$, every component of $G[S]$ is necessarily a path. If some component of $G[S]$ were a cycle, then all its vertices would have both neighbors inside the cycle. Since these vertices have degree two in $G$, the cycle could not be connected to any vertex outside it, which would contradict the connectedness of $G$.

Let $P$ be a component of $G[S]$. Following the definition in \cite{HenningYeo2007}, $P$ is called
\begin{itemize}
    \item a \emph{$0$-path} if $|V(P)|\equiv0\pmod4$ (and hence $|V(P)|\ge 4$) and the two ends of $P$ are adjacent either to the same large vertex or to two distinct adjacent large vertices;
    \item a \emph{$1$-path} if $|V(P)|\ge5$, $|V(P)|\equiv1\pmod4$, and the two ends of $P$ are adjacent to the same large vertex;
    \item a \emph{$3$-path} if $|V(P)|\equiv3\pmod4$.
\end{itemize}
For $i\in\{0,1,3\}$, let $p_i(G)$ be the number of $i$-paths, and let us define
$$p(G)=p_0(G)+p_1(G)+p_3(G).$$
Observe that every path counted by $p(G)$ contains at least three small vertices. Since these paths are distinct components of $G[S]$, we have the inequality
\begin{equation}\label{eq:p-s}
3p(G)\le |S|.
\end{equation}
This inequality will be used in the proof of the main result.

The quantity $p(G)$ appears in the following upper bound on the total domination number due to Henning and Yeo.

\begin{theorem}[\cite{HenningYeo2007}]\label{thm:HY}
Let $G$ be a connected graph of order $n(G)$ with $\delta(G)\ge 2$ and $\Delta(G)\ge 3$. Then
$$\gamma_t(G)\le \frac{n(G)+p(G)}{2}.$$
\end{theorem}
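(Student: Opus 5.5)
We would prove the bound by induction on $n(G)+m(G)$, working with a counterexample $G$ that is minimal first in order and then in size. The first step is an edge-minimality reduction. If $e=xy$ is an edge with both $x,y$ large, we try $G-e$. Whenever $G-e$ is connected, still has $\delta\ge2$, and still has a vertex of degree at least $3$, minimality gives $\gamma_t(G)\le\gamma_t(G-e)\le \frac{n(G)+p(G-e)}2$. The difficulty is that deleting $e$ can turn $x$ or $y$ into a small vertex. That merges small paths and may create a new $i$-path, and it may also destroy the ``adjacent large ends'' condition in the definition of a $0$-path. So the first concrete task is a case check showing that such a deletion either does not increase $p$, or that an increase can be paid for directly. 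After this step every large--large edge should be a bridge or be incident with a vertex of degree exactly $3$. Small cycles attached at a cut vertex would be handled separately by splitting $G$ at that cut vertex.

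The second step shortens long small paths. Let $P$ be a component of $G[S]$ with $|V(P)|\ge 8$. We replace four consecutive interior vertices $u_1u_2u_3u_4$ of $P$ by a single edge, obtaining $G'$. Then $|V(P)|$ keeps its residue mod $4$, and the type of its ends is unchanged, so $p(G')=p(G)$ and $n(G')=n(G)-4$. From a total dominating set of $G'$ we extend to $G$ by adding two of the $u_i$, chosen according to whether the new edge's ends lie in the set. This gives $\gamma_t(G)\le\gamma_t(G')+2$, and the induction closes. After this step every component of $G[S]$ has at most $7$ vertices.

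The third step, which is the core, treats graphs whose small paths are short. We would encode $G$ as a hypergraph $H$ on the large vertices. Each large vertex $v$ gives the edge $N_G(v)$ restricted to large vertices, together with the attachment vertices of the paths hanging at $v$. Each small path of length $1$, $2$ or $3$ is replaced by a hyperedge on its large attachment vertices. A total dominating set then corresponds to a transversal of $H$, augmented by a bounded number of vertices from each path. The target inequality becomes a weighted transversal bound of the form $\tau(H)\le \frac{|V(H)|+|E(H)|}4$, in the style of the known transversal bounds for hypergraphs with edges of size at least $3$. Paths of length $\equiv 3 \pmod 4$, and the special $0$- and $1$-paths, are exactly the configurations where one small vertex must be ``wasted''. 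Each such path therefore contributes the $+\tfrac12$ measured by $p(G)$. We would do the bookkeeping path by path, assigning each component of $G[S]$ a charge of half its order plus $\tfrac12$ if it is an $i$-path.

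The main obstacle is this third step. It requires a sharp transversal-type bound for the resulting mixed hypergraph and a careful accounting showing that the $0$-path condition (ends at the same or adjacent large vertices) is exactly what forces the extra $\tfrac12$. The first thing we would try is a discharging argument on a minimum counterexample to the hypergraph bound, reducing degree-$1$ vertices and edges of size $2$ in the standard way. The extremal examples (cycles $C_n$ with $n\equiv 3\pmod 4$ attached at a single large vertex) would serve as the test of sharpness.
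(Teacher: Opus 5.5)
The paper does not prove this theorem. It quotes it from Henning and Yeo and uses it as a black box, so your outline has to stand as a self-contained proof, and as written it does not.

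Your second step is sound. Deleting four consecutive interior vertices $u_1u_2u_3u_4$ of a small path of order at least $8$ and joining their outer neighbours $w,z$ preserves the order mod $4$ and the attachments of the ends, so $p(G')=p(G)$. Adding $\{u_2,u_3\}$, $\{u_1,u_2\}$, $\{u_3,u_4\}$ or $\{u_1,u_4\}$, according as neither, only $z$, only $w$, or both of $w,z$ lie in the set, gives $\gamma_t(G)\le\gamma_t(G')+2$.

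Your first step also works for a non-bridge $e$ whose ends both have degree at least $4$. There $G[S]$ is unchanged, and deleting a large--large edge can only destroy the adjacency condition of a $0$-path, so $p$ cannot increase. The case you defer is not a routine check, however. If $x$ has degree $3$, it becomes small and fuses up to two small paths through $x$ into one of order $|P_1|+|P_2|+1$; for instance, two single small vertices become a $3$-path. So $p(G-e)$ can exceed $p(G)$ by one, or by two when both ends have degree $3$. Moreover, $G-e$ may be a cycle, to which the theorem does not apply.

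The decisive gap is your third step, which carries the whole content of the theorem and is only described. As formulated it cannot work. A small path has at most two large attachment vertices, so the hyperedges you assign to paths have size $1$ or $2$. The bound $\tau(H)\le(|V(H)|+|E(H)|)/4$ of Chv\'atal--McDiarmid and Tuza requires every edge to have size at least $3$, and it is false once $2$-edges are allowed: a single $2$-edge has $\tau=1>3/4$. Paths of order $4$ to $7$, which survive your second step, are not encoded at all.

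Nor is the correspondence between total dominating sets of $G$ and transversals of $H$ made precise. A large vertex whose neighbours are all path ends must be dominated by a small vertex, which is not a vertex of $H$. The number of extra vertices each path needs depends on its order mod $4$ and on which of its attachment vertices are chosen. That computation is exactly what produces $p_0,p_1,p_3$, including the same-or-adjacent-ends condition for $0$-paths, and you assert its outcome rather than derive it.

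What is missing is either a genuine transversal theorem for such mixed hypergraphs with correction terms matching $p_0,p_1,p_3$, or a direct minimum-counterexample analysis of the configurations left after your reductions. Without it you have a reduction framework, not a proof.

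Your proposed tight examples also need rechecking. A cycle $C_n$ sharing one large vertex with the rest of the graph leaves a small path of order $n-1$. This is a $3$-path when $n\equiv0\pmod4$; for $n\equiv3\pmod4$ it is not counted by $p$ at all.
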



\section{The main result}\label{sec:main}

We first prove a lemma for the case $d^*(G)\ge3$. It gives a lower bound on the annihilation number in terms of $n(G)$ and the quantity $p(G)$ from Theorem~\ref{thm:HY}.

\begin{lemma}\label{lem:d*3}
Let $G$ be a connected graph of order $n$ with $\delta(G)\ge 2$ and $\Delta(G)\ge 3$. If $d^*(G)\ge3$, then
$$a(G)\ge \left\lfloor\frac{n(G)+p(G)}2\right\rfloor.$$
\end{lemma}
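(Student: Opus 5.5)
The plan is to show directly that the $k:=\left\lfloor\frac{n(G)+p(G)}2\right\rfloor$ smallest degrees sum to at most $m(G)$. Since $\sum_{i=1}^{n}d_i=2m(G)$, this is equivalent to
$$\sum_{i=1}^{k}d_i\le\sum_{i=k+1}^{n}d_i .$$
I will use the partition $V(G)=S\cup L$ of Section~\ref{sec:prelim} and write $s=|S|$.

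First I use the hypothesis $d^*(G)\ge3$. Since $d_{a(G)+1}\ge3$ and the degree sequence is nondecreasing, every vertex of degree $2$ lies among $v_1,\dots,v_{a(G)}$, so $s\le a(G)$. Hence if $k\le s$ we are done immediately. So assume $k>s$. Then $v_1,\dots,v_s$ are exactly the small vertices, and $v_{s+1},\dots,v_k$ are $k-s$ large vertices. Moreover $k<n$, because $p(G)\le s/3<n$. Consequently the last $n-k$ vertices are all large, and each has degree at least every $d_j$ with $j\le k$.

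Next I compare the two sums by a matching argument. From $k\le (n+p)/2$ we get $n-k\ge k-p$. By (\ref{eq:p-s}), $p\le s/3\le s$, so $n-k\ge k-s$. This lets me pair each large vertex $v_j$ with $s<j\le k$ with a distinct vertex among $v_{k+1},\dots,v_n$, whose degree is at least $d_j$. After this pairing, at least $(n-k)-(k-s)\ge s-p$ vertices of the tail are left unused, and each has degree at least $3$. The small vertices contribute exactly $2s$ to the head sum. It therefore suffices to check that
$$3(s-p)\ge 2s,$$
that is, $s\ge 3p$, which is precisely inequality (\ref{eq:p-s}). Summing the pairwise comparisons with this estimate gives $\sum_{i\le k}d_i\le\sum_{i>k}d_i$. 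Hence $\sum_{i\le k}d_i\le m(G)$, and so $a(G)\ge k$.

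The main obstacle is the bookkeeping in the pairing step. It must be legitimate to bound the $k-s$ large head vertices by distinct tail vertices; this uses the ordering together with the fact that all tail vertices are large, which is where $k<n$ and $d^*(G)\ge3$ enter. One must also check that the leftover count $s-p$ is nonnegative, which again follows from (\ref{eq:p-s}). Everything else is arithmetic with the floor, and only $k\le (n+p)/2$ is needed there.
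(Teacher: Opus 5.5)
Your proof is correct, but it reaches the bound by a different mechanism than the paper.

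\textbf{The paper's route.} The paper does not target $k=\lfloor (n(G)+p(G))/2\rfloor$ directly. It sets $\ell=\sum_{v\in L}d(v)$ and uses $d^*(G)\ge 3$ to get $2s\le m(G)$, i.e.\ $\ell\ge 2s$. An averaging argument then shows that the $t$ smallest large degrees sum to at most $t\ell/r$. Hence the small vertices together with the $t=\lfloor r/2-rs/\ell\rfloor$ smallest large vertices form an annihilation set. The bound $s+t\ge\lfloor n/2+s/6\rfloor\ge\lfloor (n+p)/2\rfloor$ follows from $\ell\ge 3r$ and $3p\le s$.

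\textbf{Your route.} You verify $\sum_{i\le k}d_i\le\sum_{i>k}d_i$ directly. You inject the large head vertices into the tail, and then count the at least $s-p$ unused tail vertices of degree at least $3$. These pay for the $2s$ contributed by the small vertices.

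\textbf{What each approach buys.} Your argument stays in integers and avoids the auxiliary quantities $\ell$ and $t$. It also makes clear that $d^*(G)\ge 3$ is needed only in the trivial subcase $k\le s$. When $k>s$ your argument never invokes it, and its conclusion $a(G)\ge k>s$ forces $d^*(G)\ge 3$ anyway. Run with $k=\lfloor n/2+s/6\rfloor$, your pairing recovers exactly the intermediate bound the paper uses. The paper's averaging retains somewhat more information, namely $a(G)\ge\lfloor n/2+\tfrac{s}{2}(1-2r/\ell)\rfloor$, which improves when the large vertices have high average degree. Your pairing discards the excess degree of each tail vertex over its partner.
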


\begin{proof}
Let $S$ be the set of degree-two vertices and $L=V(G)\setminus S$ the set of large vertices. Define $s=|S|$, $r=|L|$, and let
$$\ell=\sum_{v\in L}d(v)$$
be the degree sum of the large vertices. Since every vertex in $S$ has degree $2$, we have
\begin{equation}\label{eq:m-l}
2m(G)=2s+\ell, \qquad m(G)=s+\frac{\ell}{2}.
\end{equation}
Moreover,
\begin{equation}\label{eq:l-3r}
 \ell\ge3r,
\end{equation}
because every vertex of $L$ has degree at least $3$.

The assumption $d^*(G)\ge 3$ implies that all $s$ degree-two vertices occur among the first $a(G)$ vertices in the nondecreasing degree sequence. In particular, they are contained in an optimal annihilation set. Therefore $2s\le m(G)$. Using \eqref{eq:m-l}, we get
\begin{equation}\label{eq:l-2s}
\ell \ge 2s.
\end{equation}

We now order the degrees of the large vertices as
$$e_1\le e_2\le\cdots\le e_r, \qquad \sum_{i=1}^{r}e_i=\ell.$$
Set
$$t=\left\lfloor \frac{r}{2}-\frac{r\cdot s}{\ell}\right\rfloor.$$
By \eqref{eq:l-2s}, we have $t\ge 0$. Suppose first that $t\ge 1$. Since $e_1,\ldots,e_t$ are the $t$ smallest degrees, their average is at most the average of all $r$ degrees. Hence,
$$\frac{1}{t}\sum_{i=1}^{t}e_i\le\frac{\ell}{r},$$
and therefore
$$\sum_{i=1}^{t}e_i\le\frac{t\cdot\ell}{r}.$$
If $t=0$, the sum is considered zero and hence, in both cases,
$$\sum_{i=1}^{t}e_i\le\frac{t\cdot\ell}{r}.$$
Consequently,
$$2s+\sum_{i=1}^{t}e_i \le 2s+\frac{t \cdot \ell}{r} \le 2s+\left(\frac{r}{2}-\frac{r \cdot s}{\ell}\right)\frac{\ell}{r}=s+\frac{\ell}{2}=m(G),$$
where the last equality follows from \eqref{eq:m-l}. Hence, the set consisting of all $s$ vertices of degree two together with the $t$ large vertices of smallest degrees is an annihilation set. Therefore,
\begin{equation}\label{eq:a-st}
a(G)\ge s+t =\left\lfloor s+\frac r2-\frac{r \cdot s}{\ell}\right\rfloor.
\end{equation}

It remains to estimate the expression inside the floor. Using $n(G)=s+r$, we obtain
$$s+\frac r2-\frac{r \cdot s}{\ell}=\frac{s+r}{2}+\frac{s}{2}\left(1-\frac{2r}{\ell}\right)=\frac{n(G)}{2}+\frac{s}{2}\left(1-\frac{2r}{\ell}\right).$$
By \eqref{eq:l-3r},
$$1-\frac{2r}{\ell}\ge 1-\frac{2r}{3r}=\frac{1}{3},$$
and therefore
$$s+\frac r2-\frac{r \cdot s}{\ell}\ge \frac {n(G)}{2}+\frac{s}{6}.$$
Finally, \eqref{eq:p-s} gives $\frac{s}{6}\ge \frac{p(G)}{2}$. Thus
$$s+\frac{r}{2}-\frac{r \cdot s}{\ell} \ge \frac{n(G)+p(G)}{2}.$$
Together with \eqref{eq:a-st}, this yields
$$a(G)\ge\left\lfloor\frac{n+p(G)}2\right\rfloor,$$
as required.
\end{proof}

\medskip

We are now ready to prove the main result of this paper. The proof is based on the two bounds for the total domination number stated in Section~\ref{sec:prelim}, together with the estimate from Lemma~\ref{lem:d*3}. According to the value of the cutoff degree, we will use one of these two bounds.

\begin{theorem}\label{thm:main}
Let $G$ be a connected graph with $\delta(G)=2$. Then
$$\gamma_t(G)\le a(G)+1.$$
Moreover, if $d^*(G)\ge3$, then $\gamma_t(G)\le a(G)$.
\end{theorem}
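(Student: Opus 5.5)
The argument splits on the value of the cutoff degree $d^*(G)$. Since $\delta(G)=2$ and $a(G)\le n(G)-1$, we always have $d^*(G)\ge 2$. The two cases are therefore $d^*(G)=2$ and $d^*(G)\ge 3$. The first case will use Theorem~\ref{thm:Henning-size}, and the second will use Theorem~\ref{thm:HY} together with Lemma~\ref{lem:d*3}.

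\emph{Case $d^*(G)\ge3$.} If $\Delta(G)=2$, then $G$ is a cycle, so $d^*(G)=2$ and this case cannot occur. Hence $\Delta(G)\ge3$, and Theorem~\ref{thm:HY} applies. Since $\gamma_t(G)$ is an integer, it gives
$$\gamma_t(G)\le \left\lfloor\frac{n(G)+p(G)}{2}\right\rfloor.$$
Lemma~\ref{lem:d*3} bounds the right-hand side by $a(G)$, so $\gamma_t(G)\le a(G)$. This gives both the stronger claim and the weaker bound in this case.

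\emph{Case $d^*(G)=2$.} Here the first $a(G)+1$ vertices in the degree ordering all have degree exactly $2$, since $\delta(G)=2$. The maximality of $a(G)$ therefore gives
$$2(a(G)+1)=\sum_{i=1}^{a(G)+1}d_i>m(G), \qquad\text{so}\qquad m(G)\le 2a(G)+1.$$
Theorem~\ref{thm:Henning-size} then yields
$$\gamma_t(G)\le \frac{m(G)+2}{2}\le \frac{2a(G)+3}{2}=a(G)+\frac32 .$$
By integrality, $\gamma_t(G)\le a(G)+1$. This settles the remaining case; note that cycles fall here.

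The main obstacle is already handled by Lemma~\ref{lem:d*3}, so the theorem itself only needs care at two points. First, the condition $\Delta(G)\ge3$ must hold in the second case, which it does because cycles have $d^*=2$. Second, the strict inequality from the maximality of $a(G)$ must be combined with integrality to lose only $\frac12$.
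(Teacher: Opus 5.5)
Your proposal is correct and is essentially the paper's proof. It uses the same split on $d^*(G)$: when $d^*(G)=2$ you derive $m(G)\le 2a(G)+1$ and apply Henning's size bound, and when $d^*(G)\ge 3$ you rule out cycles to get $\Delta(G)\ge 3$ and then combine the Henning--Yeo bound with Lemma~\ref{lem:d*3}. The only difference is that you treat the two cases in the opposite order, which is cosmetic.
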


\begin{proof}
Let
$d_1\le d_2\le\cdots\le d_{n(G)}$
be the nondecreasing degree sequence of $G$. Since $\delta(G)=2$, we have $d^*(G)\ge 2$. We distinguish two cases.

\medskip
\noindent
{\bf Case 1}: $d^*(G)=2$.\\
By definition, $d^*(G)=d_{a(G)+1}$, and therefore
$$d_1=d_2=\cdots=d_{a(G)+1}=2.$$
The maximality of $a(G)$ gives
$$\sum_{i=1}^{a(G)+1}d_i>m(G).$$
Hence $2(a+1)>m(G)$ and, since $m(G)$ is an integer,
\begin{equation}\label{eq:m-2a}
m(G)\le 2a(G)+1.
\end{equation}
By Theorem~\ref{thm:Henning-size},
$$\gamma_t(G)\le\frac{m(G)+2}{2}.$$
Since $\gamma_t(G)$ is an integer, \eqref{eq:m-2a} implies
$$\gamma_t(G)\le \left\lfloor\frac{m(G)+2}{2}\right\rfloor \le \left\lfloor\frac{2a(G)+3}{2}\right\rfloor =a(G)+1.$$

\medskip
\noindent
{\bf Case 2}: $d^*(G)\ge3$. \\
If $\Delta(G)=2$, then $G$ is a cycle and $d^*(G)=2$, contrary to the assumption. Hence $\Delta(G)\ge 3$, and Theorem \ref{thm:HY} implies
$$\gamma_t(G)\le \frac{n(G)+p(G)}{2}.$$
Since $\gamma_t(G)$ is an integer, Lemma \ref{lem:d*3} gives
$$\gamma_t(G)\le \left\lfloor\frac{n(G)+p(G)}2\right\rfloor\le a(G).$$

The two cases exhaust all possible values of $d^*(G)$ and therefore complete the proof of the theorem.
\end{proof}

Since the case $\delta(G)\ge 3$ was already known, the new content of Theorem~\ref{thm:main} can be stated as follows.

\begin{corollary}\label{cor:delta2}
If $G$ is a connected graph with $\delta(G) \ge 2$, then
$$\gamma_t(G)\le a(G)+1.$$
\end{corollary}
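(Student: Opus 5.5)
The corollary should follow by splitting on the value of $\delta(G)$ and invoking results already available. Since $\delta(G)\ge 2$, either $\delta(G)=2$ or $\delta(G)\ge 3$. In the first case, Theorem~\ref{thm:main} applies directly and gives $\gamma_t(G)\le a(G)+1$, so nothing more is needed.

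For $\delta(G)\ge 3$, I will use the observation from the introduction. The definition of the annihilation number gives $a(G)\ge\lfloor n(G)/2\rfloor$: the $\lfloor n(G)/2\rfloor$ smallest degrees sum to at most half the total degree sum, which equals $m(G)$. The bound from \cite{ArchdeaconEtAl2004} gives $\gamma_t(G)\le\lfloor n(G)/2\rfloor$ for graphs with minimum degree at least three. Chaining these yields $\gamma_t(G)\le a(G)$, which is even stronger than required.

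There is essentially no obstacle here. The only point needing a line of care is the averaging claim behind $a(G)\ge\lfloor n(G)/2\rfloor$. It holds because the average of the $k=\lfloor n/2\rfloor$ smallest degrees is at most the overall average $2m/n$, so their sum is at most $k\cdot 2m/n\le m$. Connectedness and nontriviality are inherited by both cases, so both cited results apply as stated.
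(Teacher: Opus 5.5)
Your proposal is correct and follows the paper's own route: the corollary is obtained by combining Theorem~\ref{thm:main} for $\delta(G)=2$ with the known case $\delta(G)\ge 3$, and that case rests on the same chain $\gamma_t(G)\le\lfloor n(G)/2\rfloor\le a(G)$ recalled in the introduction. Your averaging argument for $a(G)\ge\lfloor n(G)/2\rfloor$ is a valid justification of a step the paper simply says follows directly from the definition.
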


\medskip

It is now clear that any counterexample to Conjecture~\ref{conj:main}, if one exists, must have minimum degree one. In particular, every counterexample must contain a pendant vertex.


\section{Sharpness and possible counterexamples}\label{sec:remarks}

The additive constant in Theorem~\ref{thm:main} cannot be removed. Indeed, for a cycle $C_n$, $n \ge 3$,
$$a(C_n)=\left\lfloor\frac{n}{2}\right\rfloor,$$
while
$$\gamma_t(C_n)=
\begin{cases}
 \frac{n}{2}+1, & n\equiv2\pmod4,\\[1mm]
 \left\lceil\frac{n}{2}\right\rceil, & \text{otherwise}.
 \end{cases}
$$
Consequently, if $n\equiv2\pmod4$, then $\gamma_t(C_n)=a(C_n)+1$. Thus the bound in Theorem~\ref{thm:main} is sharp for infinitely many connected graphs of minimum degree two. There are also arbitrarily large connected graphs with minimum degree two which are not cycles and for which equality holds. The following construction was given in \cite{BujtasJakovac2019}. For an integer $k\ge 2$, let $G_1,G_2,\ldots,G_k\cong C_6$ be pairwise vertex-disjoint cycles. For every $i\in\{1,\ldots,k-1\}$, identify one vertex of $G_i$ with one vertex of $G_{i+1}$, choosing the vertices in such a way that the resulting graph $G$ satisfies $\Delta(G)\le 4$. Then $G$ is a connected cactus graph with $\delta(G)=2$ and $m(G)=6k$, $\gamma_t(G)=3k+1$ and $a(G)=3k$. Hence, $\gamma_t(G)=a(G)+1$, so this family gives another infinite class of graphs for which the bound in Theorem~\ref{thm:main} is sharp. The proof of Theorem~\ref{thm:main} also isolates the only degree-sequence situation in which the extra $1$ can be needed. Hence, equality in the conjectured bound for a graph with $\delta(G)=2$ can occur only when $d^*(G)=2$. Both extremal families described above satisfy this necessary condition.

The bound is also sharp for graphs with minimum degree one. In \cite{DesormeauxHaynesHenning2013}, the trees satisfying $\gamma_t(T)=a(T)+1$ were characterized. Thus, there are also graphs with pendant vertices for which equality in Conjecture~\ref{conj:main} is attained.

On the other hand, Theorem~\ref{thm:main}, together with the known case $\delta(G)\ge 3$, shows that any counterexample to Conjecture~\ref{conj:main}, if one exists, must have minimum degree one. Hence, the only remaining case consists of graphs containing pendant vertices. The arguments for minimum counterexamples developed in \cite{BujtasJakovac2019} give strong restrictions on vertices of degree one and two, and may therefore be useful in the study of this final case.


\section*{Acknowledgments}

M.\ Jakovac was supported by the Slovenian Research and Innovation Agency (ARIS) under the grants P1-0297, N1-0285, N1-0431.







\end{document}